\documentclass[11pt]{amsart}
\usepackage{amsfonts,amscd,amssymb,amsmath,amsthm,mathrsfs,xcolor,lscape,amsmath,amssymb,latexsym, booktabs, rotating, graphicx}
\usepackage{enumerate,enumitem,pifont}
\usepackage[utf8]{inputenc}
\usepackage[colorlinks = true, citecolor = blue,]{hyperref}
\usepackage[english]{babel}
\newtheorem{thm}{Theorem}
\newtheorem{lem}[thm]{Lemma}
\newtheorem{defn}{Definition}

\newtheorem{que}[thm]{Question}
\theoremstyle{remark}
\theoremstyle{definition}
\newtheorem{rmk}{Remark}
\numberwithin{equation}{section}
\numberwithin{rmk}{section}
\numberwithin{thm}{section}
\numberwithin{defn}{section}

\newcommand{\C}{\mathbb{C}}
\newcommand{\N}{\mathbb{N}}
\newcommand{\R}{\mathbb{R}}

\newcommand{\Sp}{\mathrm{Sp}}

\newcommand{\GL}{\mathrm{GL}}

\newcommand{\Q}{\mathbb{Q}}
\newcommand{\Z}{\mathbb{Z}}
\newcommand{\e}{\epsilon}

\newcommand{\Om}{\Omega}

\newcommand{\spn}{\text{span}}

\title{Some Infinite Families of Arithmetic Symplectic Hypergeometric Groups}
\author{Lal Bahadur Sahu}

\address{Department of Mathematics, Indian Institue of Technology Bombay, Powai, Mumbai. 400076. INDIA.}
\email{215090034@iitb.ac.in}
\subjclass[2010]{Primary: 22E40;  Secondary: 32S40;  33C80}  
\keywords{Hypergeometric group, Monodromy representation, Symplectic group}

\begin{document}

\begin{abstract}
Using the idea of Venkataramana's construction of infinite families of arithmetic orthogonal hypergeometric groups, we construct some of the very first examples of infinite families of arithmetic symplectic hypergeometric groups that do NOT satisfy the arithmeticity criterion of Singh and Venkataramana. 

For example, we show that the hypergeometric groups associated to the pairs of polynomials $(x-1)^4P_m(x^{9})$ and $(x^4+x^3+2x^2+x+1)Q_m(x^{9})$, where $P_m$ and $Q_m$ are integral polynomials of degree $2m$ such that the corresponding pair determines a Zariski dense symplectic hypergeometric group, are arithmetic in $\Sp(18m+4)$ for any integer $m\in\N$.

\end{abstract}
\maketitle

\section{Introduction}
The primary finding of this article relates to monodromy groups of hypergeometric differential equations of type ${}_nF_{n-1}$ in the symplectic case. In \cite{B-H89}, these equations are thoroughly examined. Let $\alpha,\beta\in \C^n$ and consider the hypergeometric differential equation
\begin{equation}\label{HDE}
	D(\alpha;\beta)u = 0
\end{equation} 
on the punctured Riemann sphere $\mathbb P^1(\C)\setminus \{0,1,\infty\}$, where 
\[D(\alpha;\beta) := \left(z\frac{d}{dz} +\beta_1-1\right)\cdots\left(z\frac{d}{dz} +\beta_n-1\right)-z\left(z\frac{d}{dz} +\alpha_1\right)\cdots\left(z\frac{d}{dz} +\alpha_n\right).\]
The Equation (\ref{HDE}) is regular outside of the set $\{0,1,\infty\}$ in $\mathbb P^1(\C)$. Let $z_0\in \mathbb P^1(\C)\setminus \{0,1,\infty\}$ and let $h_0,h_\infty, h_1$ be the path-homotopy classes of loops, based at $z_0$, around $0,\infty, 1$, respectively. Then the fundamental group of $\mathbb P^1(\C)\setminus\{0,1,\infty\}$ is generated by $h_0,h_\infty,h_1$ modulo the relation that $h_0h_1h_\infty = 1$, and it acts on the local solution space of the hypergeometric differential equation (\ref{HDE}) by analytic continuation of solutions along the loops representing $h_0$, $h_\infty$ and $h_1$.This action is called a \textit{monodromy representation} and the image of this representation sitting inside $\GL_n(\C)$ is called \textit{monodromy group}, and a \textit{hypergeometric group} in our setting. 

Levelt, in his thesis \cite{L61}, showed the existence of a basis of the solution space of (\ref{HDE}), under the condition that the pair $(f,g)$ is coprime, with respect to which, the associated hypergeometric group is the subgroup of $\GL_n(\C)$ generated by the companion matrices $A$ and $B$ of the polynomials
$$ f(x) = \prod_{j=1}^{n}(x-a_j), \quad g(x) = \prod_{j=1}^{n} (x - b_j)$$
respectively, where $a_j = e^{2\pi\iota \alpha_j}$ and $b_j =e^{2\pi\iota\beta_j}$, and the monodromy representation is defined by 
$$h_\infty \mapsto A, \quad h_0\mapsto B^{-1}, \quad h_1\mapsto A^{-1}B.$$

Under the condition of Levelt, the hypergeometric group is generated by the companion matrices of the polynomials $f,g$ and we denote it as $\Gamma(f,g)\subseteq \GL_n(\C)$. In this article, we consider $f,g$ to be integral polynomials with $f(0)=g(0)= 1$. The Zariski closure ${\mathbf G}$ of the aforementioned hypergeometric group $\Gamma(f,g)$ inside $\GL_n(\C)$ has been thoroughly examined by Beukers and Heckman \cite{B-H89}. We now give a brief explanation of their findings under the premise that $(f,g)$ are coprime, \textit{self-reciprocal}; i.e., $\{a_1,\ldots,a_n\}$ and $\{b_1, \ldots, b_n\}$, roots of $f$ and $g$ respectively, are invariant under the inverse map and $(f,g)$ form a \textit{primitive pair} in the sense of \cite{B-H89}; i.e., there does not exist an integer $k\ge 2$ such that $f(x)$ and $g(x)$ both can be written as polynomials in $x^k$ (see \cite[Theorem 5.3]{B-H89}). 

In \cite[Table 8.3]{B-H89}, Beukers and Heckman have completely classified the finite hypergeometric groups. These groups correspond to the parameters (which are the roots of the associated polynomials) that interlace on the unit circle (\cite[Corollary 4.7]{B-H89}).  For an infinite hypergeometric group $\Gamma(f,g)$, which is not a scalar shift of a finite hypergeometric group, they characterize the Zariski closure ${\mathbf G}$ in two classes based on $\frac{f(0)}{g(0)}$ being $1$ or $-1$. If $\frac{f(0)}{g(0)} = -1$, then ${\mathbf{G}} = \mathrm O_Q$, where $Q$ is a non-degenerate quadratic form on $\Q^n$ preserved by $\Gamma(f,g)$, whose existence is guaranteed by the self-reciprocity of $f$ and $g$ both. In this article, we study the other case, i.e.,  $\frac{f(0)}{g(0)} =1$, and in this case ${\mathbf{G}}=\Sp_\Omega$, where $\Omega$ is a non-degenerate symplectic form on $\Q^n$ and preserved by $\Gamma(f,g)$.

In this article we focus on the hypergeometric groups $\Gamma(f,g)$, where the defining polynomials $(f,g)$ have integer coefficients and $f(0)=g(0)=1$. It follows from the definition of $\Gamma(f,g)$ that  $\Gamma(f,g)\subseteq {\mathbf{G}}(\Z)$ in this case. We now note the following definition.

\begin{defn}
	We call a hypergeometric group $\Gamma(f,g)\subseteq {\mathbf{G}}(\Z)$ arithmetic if it is of finite index in ${\mathbf{G}}(\Z)$, and thin if it has infinite index in ${\mathbf{G}}(\Z)$, where ${\mathbf{G}}$ is the Zariski closure of $\Gamma(f,g)$ inside $\GL_n(\C)$.
\end{defn}

Considerable attention has recently been directed toward Sarnak’s question \cite{PS14}, which seeks to classify the polynomial pairs $(f, g)$ whose corresponding hypergeometric groups $\Gamma(f, g)$ are arithmetic versus those that are thin. Recent progress in classifying thin hypergeometric groups is detailed in \cite{F-M-S14, B-T14, B-D-N25, B-N24, F-F24, S-S24},  while corresponding advancements for arithmetic hypergeometric groups can be found in \cite{ S-V14, TNV14, B-S15, SS15, SS17, TNV17, B-D-S-S21, B-S-S23, B-D-N26}. 

 In this article we provide at least eight infinite families of arithmetic symplectic hypergeometric groups corresponding to pair of polynomials $(f,g)$ such that the leading coefficient of $f-g$ has absolute value $\ge 3$. Because of this condition, the arithmeticity of the hypergeometric groups associated to these pairs cannot be deduced using the celebrated arithmeticity criterion of Singh and Venkataramana \cite[Theorem 1.1]{S-V14}. Moreover, in some cases it is impossible to prove the arithmeticity even by applying the extended arithmeticity criterion \cite[Remark 5.1]{S-V14} used by Bajpai, Dona,  S. Singh and S.V. Singh \cite[Proposition 1]{B-D-S-S21}. 

Finally, Table \ref{Table 1} lists polynomial pairs $(f_0,g_0)$ whose arithmeticity has been established in the literature, alongside three specific transvections. The subgroup of $\Gamma(f_0,g_0)$ generated by these three transvections contains unipotent elements corresponding to the highest and second highest roots of $\Sp(4)$. Throughout this paper, the companion matrices of $f_0$ and $g_0$ are denoted by $A_0$ and $B_0$, respectively, with $C_0 = A_0^{-1}B_0$.

\begin{table}[ht]
	\centering
	\renewcommand{\arraystretch}{1.4} 
	\caption{Arithmetic symplectic pairs of polynomials of degree four that do not satisfy the criterion of Singh and Venkataramana (but their arithmeticity is proved using the three transvections listed next to them).}
	\label{Table 1}
	\resizebox{\textwidth}{!}{%
	\begin{tabular}{ccccccc}
		\toprule
		No. & $f_0$ & $g_0$  & Transvections & Source \\ 
		\midrule
		1 & $(x-1)^4$ & $(x^2+x+1)(x^2+1)$ & $B_0^{3}C_0B_0^{-3}, C_0, B_0^{-3}C_0B_0^{3}$ & \cite[2.1]{SS15} \\
		2 & $(x-1)^4 $ & $(x^2+x+1)(x^2-x+1)$  & $B_0^{3}C_0B_0^{-3}, C_0, B_0^{-1}C_0B_0$ & \cite[2.3]{SS15} \\
		3 & $(x-1)^4$& $(x^2+1)^2$  & $B_0^{3}C_0B_0^{-3}, C_0, B_0^{-5}C_0B_0^{5 }$ & \cite[2.4]{SS15} \\
		4 & $(x^2+x+1)^2$ & $(x-1)^2(x^2+1)$  & $A_0^{4}C_0A_0^{-4}, C_0, A_0^{-4}C_0A_0^{4 }$ & \cite[3.2]{SS17}\\
		5 & $(x^2+x+1)^2 $&$ (x-1)^2(x^2-x+1)$  & $A_0^{4}C_0A_0^{-4}, C_0, A_0^{-4}C_0A_0^{4 }$ & \cite[3.3]{SS17}\\
		6 & $(x^2+x+1)^2$ & $(x^2+1)(x^2-x+1)$  & $B_0^{3}C_0B_0^{-3}, C_0, B_0^{-3}C_0B_0^{3}$ & \cite[3.5]{SS17} \\
		7 & $(x^2+x+1)^2$ &$ (x^4-x^3+x^2-x+1)$  & $A_0^{4}C_0A_0^{-4}, C_0, A_0^{-4}C_0A_0^{4 }$ & \cite[3.6]{SS17}\\
		8 & $(x+1)^2(x^2+x+1)$ &$(x^2+1)(x^2-x+1)$  & $B_0^{3}C_0B_0^{-3}, C_0, B_0^{-3}C_0B_0^{3}$ & \cite[3.7]{SS17} \\
		\bottomrule
	\end{tabular}%
	}
\end{table}

We now state our main theorem.
\begin{thm}\label{main thm}
Let  $m \in\N$, and let $(f_0, g_0)$ be a pair of polynomials listed in Table \ref{Table 1}. Suppose $P_m$ and $Q_m$ are monic, self-reciprocal polynomials of degree $2m$ with integer coefficients, satisfying $P_m(0) = Q_m(0) = 1$. Let $f(x) = f_0(x)P_m(x^{9})$ and $g(x) = g_0(x)Q_m(x^{9})$. If $f$ and $g$ are coprime, then the group $\Gamma(f,g)$ is an arithmetic subgroup of the  symplectic group $\Sp(18m+4)$. 
\end{thm}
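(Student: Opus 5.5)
Following Venkataramana's construction for orthogonal groups, the plan is to show that $\Gamma(f,g)$ contains unipotent elements attached to the highest and second highest roots of $\Sp(18m+4)$ (relative to a suitable $\Q$-split torus and parabolic). By the standard arithmeticity criterion in the form used in \cite{S-V14} (and in its extended form \cite[Remark 5.1]{S-V14}), this gives arithmeticity: for a Zariski dense subgroup of $\Sp_\Omega(\Z)$, containing a finite-index subgroup of $\mathbf U(\Z)$ for the unipotent radical $\mathbf U$ of a parabolic with nontrivial opposite intersection implies finite index. Zariski density of $\Gamma(f,g)$ in $\Sp_\Omega$ follows from Beukers--Heckman. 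The hypotheses needed are that $f,g$ are coprime, self-reciprocal, with $f(0)=g(0)=1$, primitive, and not a scalar shift of a finite group. Primitivity holds because $f_0,g_0$ are not polynomials in $x^k$ for any $k\ge2$. Infiniteness follows because $f_0$ or $g_0$ has a root of multiplicity $\ge2$ (or the roots do not interlace).

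The key structural input is the following observation. Set $A=$ companion of $f$, $B=$ companion of $g$, $C=A^{-1}B$. Since $f-g=f_0P_m(x^9)-g_0Q_m(x^9)$, $C$ is a transvection $C=I+ (\text{rank one})$, say $C v=v+\Omega(v,w)w$ for a cyclic vector $w$ of $\Z^{18m+4}$. Consider the subspace $W=\spn\{w,Bw,B^2w,B^3w\}$, or more precisely the span of $w$ and its translates appearing in the three transvections of Table \ref{Table 1}, for example $B^{\pm3}w,w,B^{\pm5}w$. In degree $4$ these translates involve only powers $B^j$ with $|j|\le 5<9$. Because $P_m(x^9)$ and $Q_m(x^9)$ differ from $1$ only in degrees $\ge9$, the companion matrices of $f$ and of $f_0$ agree in their action on vectors built from the first few basis vectors, modulo corrections in degrees $\ge 9$. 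The point to verify is that the Gram matrix $\bigl(\Omega(B^iw,B^jw)\bigr)$ for $|i|,|j|\le 5$ coincides with the corresponding Gram matrix for $(f_0,g_0)$. Here one uses the formula $\Omega(B^iw,B^jw)$ determined by the power series coefficients of $f/g$ or $g/f$ (as in Venkataramana's computation, $\Omega(w,B^kw)$ is read off from the expansion of $f(x)/g(x)$ at $0$ up to order $|k|$). Since $f/g=(f_0/g_0)\cdot P_m(x^9)/Q_m(x^9)$ and the second factor is $1+O(x^9)$, the coefficients agree up to order $8$. This is the main step and where care is needed. I expect it to be the main obstacle: one must check that every relevant index difference satisfies $|i-j|\le 8$ (e.g. $3-(-5)=8$ for case 3 is exactly borderline, and $4-(-4)=8$ for the $A$-cases), and handle the cases using $A$ analogously via $g/f$.

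Granting this, the three transvections $B^{3}CB^{-3}, C, B^{-k}CB^{k}$ (resp. with $A$) generate a group acting on the $4$-dimensional nondegenerate subspace $W$ exactly as the three transvections of Table \ref{Table 1} act on $\Q^4$. They act trivially on $W^\perp$, so $\Omega|_W$ is nondegenerate because the Gram matrix equals the nondegenerate degree-$4$ one. By the cited results \cite{SS15,SS17}, this group contains unipotents for the highest and second highest roots of $\Sp(W)\cong\Sp(4)$. Choose a symplectic basis of $\Z^{18m+4}\cap(W\oplus W^\perp)$ extending one of $W$. Then the highest root vector $e_1$ of $\Sp(W)$ is isotropic in the big space, and the corresponding root unipotents $x\mapsto x+\lambda\Omega(x,e_1)e_1$ and the second-highest-root elements are root unipotents of $\Sp(18m+4)$ for the parabolic stabilizing the line $\Q e_1$. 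Finally, apply the criterion of \cite[Theorem 1.1 / Remark 5.1]{S-V14} in the form of \cite[Proposition 1]{B-D-S-S21}. The nontrivial Zariski dense group $\Gamma(f,g)$ containing these root unipotents, whose conjugates under $\Gamma(f,g)\cap P$ generate a finite-index subgroup of the Heisenberg unipotent radical, forces finite index in $\Sp_\Omega(\Z)$. Hence $\Gamma(f,g)$ is arithmetic.
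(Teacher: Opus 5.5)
Your overall architecture matches the paper's. You transplant the three transvections of Table~\ref{Table 1} into $\Gamma(f,g)$, show they act on a nondegenerate $4$-dimensional $W$ exactly as in $V_0$ and trivially on $W^\perp$, and read off highest and second-highest root elements of $\Sp(18m+4)$.

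Your way of matching the pairings differs from the paper's remainder computation (Lemma~\ref{lc in x^3 and x^{n-1}}) and is cleaner. With $\Om(v,x^{n-1})=1$, $\Om(v,B^kv)$ is the coefficient of $x^{-1}$ at $\infty$ of $x^{k-1}(f-g)/g$. By self-reciprocity this is the $k$-th Taylor coefficient of $f/g$ at $0$ for $k\ge 1$, and $-[x^k](g/f)$ in the $A$-cases. Since $f/g=(f_0/g_0)\,P_m(x^9)/Q_m(x^9)$ and the last factor is $1+O(x^9)$, all pairings with index difference at most $8$ agree. This reproduces exactly the thresholds of Table~\ref{Table 2}. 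You should prove this formula rather than attribute it, but it is correct.

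There are nonetheless two genuine gaps.

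\textbf{First gap: $W$ is never correctly specified.}
\begin{itemize}
\item $\spn\{w,Bw,B^2w,B^3w\}$ does not contain the vectors $B^{-1}w$, $B^{-3}w$, $B^{-5}w$, $A^{\pm4}w$ that define the transvections.
\item The span of the three transvection vectors alone is $3$-dimensional, hence degenerate.
\item Agreement of the Gram matrix over $|i|,|j|\le 5$ is false in general. Differences $9$ and $10$ pick up terms such as $p_1-q_1$, where $p_1,q_1$ are the linear coefficients of $P_m,Q_m$.
\end{itemize}
You need a fourth translate whose exponent lies between the extreme exponents of the three, so that all differences stay $\le 8$. The four corresponding vectors must also form a basis of $V_0$. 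This is exactly the content of Table~\ref{Table 2}, e.g.\ $\{B_0^{-5}v_0,v_0,B_0^2v_0,B_0^3v_0\}$ and $\{A_0^{-4}v_0,v_0,A_0v_0,A_0^4v_0\}$. Alternatively, keep only the three vectors, note they are independent in $V_0$, and enlarge their spans in $V$ and $V_0$ compatibly by Witt's theorem. Only after this does ``$\Om|_W$ is nondegenerate because its Gram matrix is the degree-$4$ one'' follow.

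\textbf{Second gap: the closing step fails as written.} The results you invoke, \cite[Theorem 1.1, Remark 5.1]{S-V14} and \cite[Proposition 1]{B-D-S-S21}, are the Singh--Venkataramana criterion and its extension. The paper notes these do not apply to these pairs, because the leading coefficient of $f-g$ has absolute value at least $3$.

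Your bridging claim is unsupported. You assert that $\Gamma(f,g)\cap P$-conjugates of your two root elements generate a finite-index subgroup of the Heisenberg radical $U(\Z)$. But nothing is known about $\Gamma(f,g)\cap P$, for instance whether its image in the Levi factor $\Sp(e_1^\perp/\Q e_1)$ is Zariski dense.

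The right tool is Venkataramana's criterion \cite[Theorem 3.5]{TNV87}: a Zariski-dense subgroup of $\Sp_\Om(\Z)$ with nontrivial elements in the highest and second-highest root groups is arithmetic. That hypothesis is exactly what your construction yields. With this citation no statement about $U(\Z)$ is needed, and the proof closes as in the paper.
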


We note the following remarks.

\begin{rmk}
    Note that the self-reciprocal and coprime condition, along with the symplectic condition $P_m(0)=Q_m(0)=1$, on the pair of the polynomials $P_m$ and $Q_m$ forces the degrees of these polynomials to be an even integer.
\end{rmk}

\begin{rmk}
As far as we know, the infinite families constructed in Theorem \ref{main thm} provide very first examples of an infinite family of arithmetic symplectic hypergeometric groups for which the generating polynomials do NOT satisfy the arithmeticity criterion of Singh and Venkataramana. Another infinite family of arithmetic symplectic hypergeometric groups is constructed in \cite[Theorem 25]{TNV14} but the hypergeometric groups of this family do satisfy the arithmeticity criterion of Singh and Venkataramana.
\end{rmk}

\begin{rmk}
	In Theorem \ref{main thm} we are not assuming $P_m,Q_m$ to be a product of cyclotomic polynomials (but they must be self-reciprocal for obvious reasons). The following example of the pair  $P_m, Q_m$ works uniformly for all the pairs listed in Table \ref{Table 1}:
    \[P_m(x)= (x^2+3x+1)^{m}, \quad Q_m(x) = (x^2+4x+1)^{m}.\]
     Observe that the above polynomials are not product of cyclotomic polynomials. We provide another example of the pair $P_m, Q_m$, which are product of cyclotomic polynomials:
    \[P_m(x)= (x^2+x+1)^{m}, \quad Q_m(x) = (x^2+1)^{m}.\]
\end{rmk}

\begin{rmk}\label{primitive pair}
	Since the coefficient of $x$ in $f_0$ is nonzero in all the eight cases of Table \ref{Table 1}, the pair $(f,g)$ of Theorem \ref{main thm} does form a primitive pair.
\end{rmk}
%

\begin{rmk}
	It is remarked in Section \ref{section 5} that the method of the proof of Theorem \ref{main thm} can easily be used to provide more infinite families of arithmetic symplectic hypergeometric groups by taking the polynomial pair $(f_0,g_0)$ of degree $6$ with slight change in the exponent of $x$ appearing as $x^{9}$ in $P_m(x^{9})$ and $Q_m(x^{9})$ (one may need to consider $P_m(x^{k})$ and $Q_m(x^{k})$ for some other $k\in\N$).
\end{rmk}

\begin{rmk}
	To prove arithmeticity, the unipotent radical technique is used by Venkataramana in \cite[Theorem 7]{TNV17} due to the $\Q$-rank not always being equal to $\R$-rank in the orthogonal case. Since it is not so in the symplectic case, the proof becomes a bit simpler.
\end{rmk}

\subsection{Strategy of the proof of Theorem \ref{main thm}} 

The main idea of the proof is to use the arithmeticity criterion of Venkataramana \cite[Theorem 3.5]{TNV87} by showing that some reflection subgroup of $\Gamma(f,g)$ intersects the highest and second highest root groups of the corresponding symplectic group $\Sp(18m+4)$ non-trivially. To prove it, we first show that $\Gamma(f,g)$ contains a `copy' of the subgroup of $\Gamma(f_0,g_0)$ generated by the three transvections listed in Table \ref{Table 1}. It is already known (see the ``Transvections" and ``Source" columns of Table \ref{Table 1}) that the subgroup generated by the listed transvections in $\Gamma(f_0,g_0)$ contains unipotent elements corresponding to the highest and second highest roots of $\Sp(4)$. We choose a basis of $\Q^{18m+4}$, with respect to which, the copies of the unipotent elements of $\Gamma(f_0,g_0)$ in $\Gamma(f,g)$ correspond to the highest and second highest roots of $\Sp(18m+4)$. This idea of the proof is originated from the paper \cite{TNV17} of Venkataramana where he constructs infinite families of arithmetic orthogonal hypergeometric groups. We translate his idea to the symplectic case.

\section{Proof of the Theorem \ref{main thm}}

\subsection{Discussion of action and form}
Consider the vector space $V=\Q[x]/\langle g(x)\rangle$, where $g(x)$ is the polynomial given in Theorem \ref{main thm}. It follows that the companion matrix $B$ of the polynomial $g(x)$ is the matrix, with respect to the basis $\{1=x^0,x,\ldots, x^{n-1}\}$, of the operator acting on $V$ via multiplication by $x$. It also follows that the companion matrix $A$ of the polynomial $f(x)$ given in Theorem \ref{main thm} is the matrix, with respect to the same basis, of the operator acting on $V$ via multiplication by $x$ but sending the basis vector $x^{n-1}$ to $x^n-f(x)$. Let $C = A^{-1}B$. Clearly, $C$ fixes the basis vectors $1,x,\ldots, x^{n-2}$ and sends the basis vector $x^{n-1}$ to the vector $x^{n-1}+\frac{f(x)-g(x)}{x}$ of $V$. Let us denote the vector $(C-I)(x^{n-1})=\frac{f(x)-g(x)}{x}$ by $v$.

Observe that the polynomials $(f, g)$, appearing in Theorem \ref{main thm}, are self-reciprocal polynomials and form a primitive pair. Since the polynomials $(f, g)$ are also co-prime, it follows from \cite{B-H89} that there exists a unique (up to scalar multiplications) non-degenerate symplectic form $\Om$ on $V$ that is preserved by $\Gamma(f,g)$. 

We now note the following lemma from \cite[Lemma 4.1]{S-V14} (cf. \cite[Remark 3]{TNV17}). For the sake of completeness, we also note a proof of the lemma.

\begin{lem}\label{v is cyclic}
	The vector $v$ is orthogonal to the first $n-1$ basis vectors, $1, x, \ldots, x^{n-2}$, with respect to the symplectic form $\Om$. Moreover, $v$ is cyclic for the action of $B$ (and also for the action of $A$) on $V$.
\end{lem}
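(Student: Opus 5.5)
We argue directly from the invariance of $\Om$ under $C=A^{-1}B$ and from the coprimality of $f$ and $g$.

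\emph{Orthogonality.} The operator $C$ fixes $e_i=x^i$ for $0\le i\le n-2$ and sends $x^{n-1}$ to $x^{n-1}+v$. Hence $(C-I)V=\Q v$, and $C-I$ vanishes on $W=\spn\{1,\ldots,x^{n-2}\}$. Since $C\in\Gamma(f,g)$ preserves $\Om$, for $w\in W$ we have
\[\Om(w,x^{n-1})=\Om(Cw,Cx^{n-1})=\Om(w,x^{n-1}+v)=\Om(w,x^{n-1})+\Om(w,v).\]
Therefore $\Om(w,v)=0$ for all $w\in W$. This proves the first claim. As a by-product, $\Om(v,v)=0$ holds automatically since the form is alternating, and non-degeneracy then forces $\Om(x^{n-1},v)\neq0$ once we know $v\neq 0$. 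Note that $v\ne0$ because $f\ne g$ by coprimality.

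\emph{Cyclicity for $B$.} Identify $V=\Q[x]/\langle g\rangle$ with $B$ acting as multiplication by $x$. The $B$-invariant subspace generated by $v$ is the ideal $\langle v\rangle$ of this ring, which equals $\langle \gcd(v,g)\rangle/\langle g\rangle$. So $v$ is cyclic iff $\gcd(v,g)=1$ in $\Q[x]$, where we regard $v=(f-g)/x$ as a polynomial of degree $<n$. The constant terms satisfy $f(0)=g(0)=1$, so $x$ divides $f-g$ and $v$ is a genuine polynomial. Now suppose an irreducible $p$ divides both $v$ and $g$. Then $p$ divides $xv=f-g$ and $g$, hence $p$ divides $f$, contradicting $\gcd(f,g)=1$. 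Therefore $v$ is cyclic for $B$.

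\emph{Cyclicity for $A$.} We transport the argument to $V'=\Q[x]/\langle f\rangle$ with $A$ acting as multiplication by $x$. Both $A$ and $B$ act on the same $\Q^n$ with the same basis $x^0,\ldots,x^{n-1}$, and they agree on the first $n-1$ basis vectors. So the identification of coordinate vectors is the same linear isomorphism $\Q^n\cong V\cong V'$, sending $x^i\mapsto x^i$ for $i<n$. Under it, $v$, being of degree $<n$, corresponds to the same polynomial $(f-g)/x$ in $V'$. The same gcd argument, with the roles of $f$ and $g$ swapped, gives cyclicity for $A$.

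Alternatively, one can avoid the second model. Since $A=BC^{-1}$ and $C^{\pm1}$ differs from $I$ by a map with image in $\Q v$, any $A$-invariant subspace containing $v$ is also $B$-invariant. Such a subspace is therefore all of $V$.

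The only subtle point is ensuring that the identification of $v$ in the $A$-model is consistent, and the second argument handles it cleanly. So I would present that version.
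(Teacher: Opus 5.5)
Your proposal is correct. The orthogonality step is exactly the paper's argument; the cyclicity step takes a different route.

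The paper argues by contradiction. If $U=\spn\{Bv,B^2v,\ldots\}$ were proper, then the identity $Aw=Bw-\alpha_w Bv$ (with $\alpha_w$ the $x^{n-1}$-coefficient of $w$) would make $U$ invariant under $A$ as well as $B$. That contradicts the irreducibility of $\Gamma(f,g)$ (Beukers--Heckman, Proposition 3.3), and cyclicity of $v$ then follows from that of $Bv$.

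You instead identify $\Q[B]v$ with the ideal $\langle \gcd((f-g)/x,\,g)\rangle/\langle g\rangle$. You then observe that a common factor of $(f-g)/x$ and $g$ would divide $f$.

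\begin{itemize}
\item \textbf{What your version buys.} It is elementary and self-contained: it uses only $\gcd(f,g)=1$ and $f(0)=g(0)$, not the Beukers--Heckman irreducibility theorem (which is itself a consequence of coprimality). It also treats $A$-cyclicity explicitly, whereas the paper only asserts it parenthetically.
\item \textbf{What the paper's version buys.} It is model-free. It needs only that $A$ and $B$ differ by a rank-one map with image $\Q Bv$ and that the group is irreducible, so it transfers verbatim to settings without the polynomial-ring description.
\end{itemize}

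Your second argument for $A$ is the paper's mechanism run in the opposite direction. An $A$-invariant subspace containing $v$ is $C$-invariant, hence $B$-invariant since $B=AC$.
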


\begin{proof}
	Using the $\Gamma(f,g)$-invariance of $\Om$, we have for all $i\in \{0,1,\ldots, n-2\}$,
	\[\Omega(x^i,x^{n-1}) = \Omega(Cx^i,Cx^{n-1}) = \Omega(x^{i},x^{n-1}+v) = \Omega(x^i,x^{n-1})+\Omega(x^i,v).\]
	Canceling $\Omega(x^i,x^{n-1})$ yields $\Omega(x^i,v) = 0$ for $i =0,1,\ldots, n-2$. 
	
	We show here that the vector $Bv$ is a cyclic vector for the action of $B$ on $V$, and this implies that $v$ is also a cyclic vector for the action of $B$ on $V$.  Let, if possible, $Bv$ is not a cyclic vector for the action of $B$, that is, the subspace $U = \spn\{Bv,B^2v,\ldots\}$ is a proper subspace of $V$. Observe that for any $w\in \Q^n$, $Aw = Bw - \alpha_w Bv$, where $\alpha_w$ is the coefficient of $x^{n-1}$ in $w$. This implies that $U$ is an $A$-invariant subspace of $V$. By definition, $U$ is also $B$-invariant. It shows that the action of $\Gamma(f,g)=\langle A,B\rangle$ on $V$ is not irreducible and this is a contradiction to \cite[Proposition 3.3]{B-H89}. Hence, $Bv$ is cyclic, and therefore $v$ is also cyclic for the action of $B$ on $V$.      
\end{proof}

Since $\Om$ is non-degenerate, it follows from Lemma \ref{v is cyclic} that $\Om(v,x^{n-1})\neq 0$. We normalize $\Om$ in such a way that $\Om(v,x^{n-1}) = 1$. Therefore, for any vector $w = q_0+ q_1x+\cdots+ q_{n-1}x^{n-1}\in V$ with $q_i\in\Q$, we have $\Om(v,w) = q_{n-1}.$ 

Since $(C-I)(x^{n-1})=v$, it follows that $Bv = Av = f(x)-g(x)$, a polynomial of degree at most $n-1$. It is important to note that $Bv = Av$ holds in symplectic case but it is not so in orthogonal case because the coefficient of $x^{n-1}$ in $v$ is $-2$ there. Having the condition $Bv = Av$ is very crucial for the hypothesis of Theorem \ref{main thm} to get hold for all the examples of Table \ref{Table 1}. 

\begin{rmk}\label{form calculation}
	Since $v$ is cyclic for the action of $B$, $\{B^{-1}v, v, Bv, \ldots, B^{n-2}v\}$ forms a basis for the vector space $V$ (this basis is used for Example $2$ of Table \ref{Table 1}). To determine the symplectic form, we need to calculate $\Om(B^kv,B^lv)$ for $-1\le k,l\le n-2$. But the $\Gamma(f,g)$-invariance of the symplectic form $\Om$ implies that it is sufficient to calculate $\Om(v,B^kv)$ for $0\le k\le n-1$. As discussed in the above paragraph, $\Om(v,B^kv)$ is the coefficient of $x^{n-1}$ in the linear combination of the vector $B^kv$, when written as the linear combination of the basis vectors $1,x, \ldots, x^{n-1}$. Thus, by the observation made, $\Om(v,Bv)$ is just the coefficient of $x^{n-1}$ in $f-g$. Since $B$ acts on $V$ as multiplication by $x$, we only have to look at the coefficient of $x^{n-1}$ in $x^{k-1}(f-g)$ to find $\Om(v,B^kv)$ for $k\ge 2.$    
\end{rmk}

\subsection{Transfer of particular unipotent elements}
We now examine the case of degree four polynomial pairs $(f_0,g_0)$ listed in the Table \ref{Table 1}. To understand it, we take Example $2$ from the Table \ref{Table 1}. In this example $f_0(x) = (x-1)^4$ and $g_0(x) = (x^2+x+1)(x^2-x+1)$. Clearly, $f_0,g_0$ form a primitive pair. The companion matrices $A_0$ and $B_0$ associated to $f_0$ and $g_0$, respectively, give rise to the associated hypergeometric group, denoted as $\Gamma(f_0, g_0)$ in $\Sp(4)$. This subgroup preserves a non-degenerate (normalized) symplectic form $\Omega_0$ on the vector space $V_0 = \Q[x]/\langle g_0(x)\rangle$. As it is mentioned above, the cyclic vector $v_0\in V_0$, in this particular case, is $v_0 = -4x^2+5x-4$. The elements $w_0$ of $V_0$ are expressed as polynomials of degree less than or equal to $3$, that is, $w_0 = q_{03} x^3 +q_{02} x^2 +q_{01}x +q_{00}$, where $q_{0i}\in \Q$. It is clear that the set $\{B_0^{-1}v_0, v_0, B_0^2 v_0\}$, being a subset of a basis generating the vector space $V_0$ generated by the cyclic vector $B_0^{-1}v_0$ under the action of $B_0$, is linearly independent. An easy calculation shows that $B_0^3v_0 = -5x^2+4x-5$, which can not be written as a linear combination of the vectors $B_0^{-1}v_0, v_0, B_0^2 v_0$. Since $V_0$ is a $4$-dimensional vector space, it is spanned by the four linearly independent vectors $B_0^{-1}v_0, v_0, B_0^2 v_0$ and $B_0^3v_0$.

With the foregoing hypothesis of Theorem \ref{main thm}, we get that $f(x) = f_0(x)P_m(x^{9})$ and $g(x) = g_0(x)Q_m(x^{9})$ are monic polynomials of degree $n = 2(9m+2)$, where $2m$ is the degree of the polynomials $P_m(x), Q_m(x)\in \Z[x]$, with $P_m(0) = Q_m(0) = 1$ and $(f,g)$ is coprime. These conditions forces that $f(0) = g(0)= 1$. Along with these conditions, the pair $(f,g)$ also forms a primitive pair (cf. Remark \ref{primitive pair}). 

Let $W$ be the subspace, spanned by the vectors $B^{-1}v, v, B^2v, B^3v$,  of the vector space $V$, where $v$ is the same vector introduced at the beginning of this section. It is an easy observation that $W$ is a $4$-dimensional vector subspace as the degree of the polynomials $f,g$ is greater or equal to $22$ and it follows from  Lemma \ref{v is cyclic} that the set $\{B^{-1}v, v, B^2v, B^3v\}$ is linearly independent in $V$. It is also easy to see that there is a vector space isomorphism 
$$\phi: V_0\to W\subset V,  \mbox{ defined by sending } B^l_0v_0\mapsto B^lv, \text{ for }l=-1,0,2,3.$$

We now note down Lemma \ref{lc in x^3 and x^{n-1}}, which is an easy generalization of \cite[Lemma 4]{TNV17}, to make it work for the examples of Table \ref{Table 1} whose transvections are conjugates of $C_0=A_0^{-1}B_0$ by powers of $B_0$. The Lemma \ref{lc in x^3 and x^{n-1}} is used in the proof of Lemma \ref{symplectic iso}, which is again a replica of the idea laid out in \cite[Lemma 5]{TNV17}.

\begin{lem}\label{lc in x^3 and x^{n-1}}
	Let $p$ be a positive integer. If $j\le 3,$ then the coefficient of $x^{n-1}$ in the remainder of $x^j(f_0(x)P_m(x^p)-g_0(x)Q_m(x^p))$ upon division by $g_0(x)Q_m(x^p)$ is the same as the coefficient of $x^3$ in the remainder of $x^j(f_0(x)- g_0(x))$ upon division by $g_0(x)$ for $p\ge 5$.
\end{lem}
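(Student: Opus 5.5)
The plan is to write $x^j(f-g)$ as an explicit sum of polynomials, each of degree at most $n-1$, where $n=4+2mp$ and $g=g_0(x)Q_m(x^p)$. Such a sum is then already the remainder upon division by $g$. After that, the coefficient of $x^{n-1}$ can be read off directly. I assume $0\le j\le 3$. The case $j=-1$ (which is needed for the basis vector $B^{-1}v$) works the same way, because $x\mid f_0-g_0$ and $x\mid P_m(x^p)-Q_m(x^p)$, so $x^{-1}(f-g)$ is a polynomial.

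\emph{Step 1 (splitting).} Write
\[
x^j(f-g)=x^j(f_0-g_0)\,P_m(x^p)+x^j g_0\,(P_m-Q_m)(x^p).
\]
Since $P_m$ and $Q_m$ are both monic of degree $2m$, the difference $P_m-Q_m$ has degree at most $2m-1$. Hence the second summand has degree at most $j+4+p(2m-1)\le 7+2mp-p$. This is strictly less than $n-1=3+2mp$ as soon as $p\ge 5$. So the second summand is already reduced modulo $g$ and contributes nothing to the coefficient of $x^{n-1}$.

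\emph{Step 2 (reducing the first summand).} Divide in degree $4$:
\[
x^j(f_0-g_0)=q(x)\,g_0(x)+r(x),\qquad \deg r\le 3.
\]
Here $\deg q\le j-1\le 2$, since $\deg(f_0-g_0)\le 3$. Then modulo $g$,
\[
q\,g_0\,P_m(x^p)\equiv q\,g_0\,P_m(x^p)-q\,g = q\,g_0\,(P_m-Q_m)(x^p).
\]
The right-hand side has degree at most $2+4+p(2m-1)<n-1$ for $p\ge 5$ (in fact $p\ge 4$ suffices here), so its $x^{n-1}$-coefficient is $0$.

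\emph{Step 3 (the remaining term).} Write $P_m(y)=y^{2m}+\tilde P(y)$ with $\deg\tilde P\le 2m-1$. Then
\[
r\,P_m(x^p)=r(x)\,x^{2mp}+r(x)\,\tilde P(x^p).
\]
The second piece has degree at most $3+p(2m-1)<n-1$. The first piece has degree at most $n-1$, and its $x^{n-1}$-coefficient equals the $x^3$-coefficient of $r$, which is exactly the quantity in the statement. All the pieces from Steps 1--3 have degree at most $n-1$ and their sum is congruent to $x^j(f-g)$ modulo $g$, so by uniqueness of the remainder the claim follows.

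The only delicate point is the degree bookkeeping. One has to check that every auxiliary term stays strictly below degree $n-1$. This is where the hypothesis $p\ge 5$ enters, through the worst case $j=3$ in Step 1: $7+2mp-p<3+2mp$ holds if and only if $p>4$.
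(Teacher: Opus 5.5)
Your proof is correct and is essentially the paper's argument. Both write $x^j(f-g)$ as $x^j(f_0-g_0)$ times a monic degree-$2mp$ factor plus a low-degree correction, check every piece has degree at most $n-1$ (with $p\ge 5$ needed for the correction term to miss $x^{n-1}$), and read off the top coefficient. The only cosmetic difference is the choice of split: the paper uses $x^j(f_0-g_0)Q_m(x^p)+x^jf_0\,(P_m-Q_m)(x^p)$, so $q_0g_0Q_m(x^p)$ is already a multiple of $g$ and your Step 2 is not needed.
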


\begin{proof}
	For $x^j(f_0(x)P_m(x^p)-g_0(x)Q_m(x^p))$, there exist $q(x),r(x)\in \Z[x]$ such that 
    \begin{equation}\label{coefficient}
        x^j(f_0(x)P_m(x^p)-g_0(x)Q_m(x^p))= q(x)g_0(x)Q_m(x^p)+r(x)
    \end{equation}
    where $\deg r(x)< \deg g_0(x)Q_m(x^p)$; and for $x^j(f_0(x)-g_0(x))$, there exist $q_0(x),r_0(x)\in \Z[x]$ such that
\begin{equation}
    x^j(f_0(x)-g_0(x)) = q_0(x)g_0(x)+r_0(x)
\end{equation}
    where $\deg r_0(x)< \deg g_0(x)$. Also, Observe that
\begin{equation}\label{coefficient of x^{n-1}}
    x^j(f_0(x)P_m(x^p)-g_0(x)Q_m(x^p)) = x^j(f_0(x)-g_0(x))Q_m(x^p)+x^jf_0(x)(P_m(x^p)-Q_m(x^p))
\end{equation}
    and
    \begin{equation}\label{coefficient of x^3}
        x^j(f_0(x)-g_0(x))Q_m(x^p)= q_0(x)g_0(x)Q_m(x^p)+r_0(x)Q_m(x^p).
    \end{equation}
Since the degree of $x^jf_0(x)(P_m(x^p)- Q_m(x^p))$ is at most $p(2m-1)+7$ and that of $r_0(x)Q_m(x^p)$ is at most $2pm+3$, it follows from Equations (\ref{coefficient of x^{n-1}}) and (\ref{coefficient of x^3}) that the remainder $r(x)$ appearing in Equation (\ref{coefficient}) is given by the equation
\begin{equation}\label{remainder}
    r(x)=r_0(x)Q_m(x^p)+x^jf_0(x)(P_m(x^p)-Q_m(x^p))
\end{equation}
if $p\ge 5$ (in this case the degree of the polynomial $g_0(x)Q_m(x^p)$ is $2mp+4$, which is larger than the maximum of $p(2m-1)+7$ and  $2pm+3$).

	 Finally, it follows from Equation (\ref{remainder}) that if $p\ge 5$, then $2pm-p+7<2pm+3$ and hence the coefficient of $x^{n-1} =x^{2pm+4-1}=x^{2mp+3}$ in $r(x)$ is equal to the coefficient of $x^{2mp+3}$ in $r_0(x)Q_m(x^p)$ but the coefficient of $x^{2mp+3}$ in $r_0(x)Q_m(x^p)$ is equal to the coefficient of $x^3$ in $r_0(x)$ (since the polynomial $Q_m(x^p)$ is a monic polynomial of degree $2mp$) and the lemma follows.
\end{proof}

Now, for each $l\in \Z$, consider the linear operator defined on $V$ as 
$$ w\mapsto w - \Om(w,B^lv)B^lv$$ for $w\in V$.

Observe that the above operator fixes the first $n-1$ vectors of the basis $\{B^lx^0, B^lx, \ldots, B^lx^{n-1} \}$ of $V$, that is, for $k =0,\ldots, n-2$, it maps
$$B^lx^k\mapsto B^lx^k,$$ 
but it maps $$B^lx^{n-1}\mapsto B^lx^{n-1}+B^lv$$ (recall that $\Om(v,x^{n-1})=1$). 

Observe that the above operator is the transvection $B^lCB^{-l}$. We denote this transvection as $C_{B^lv}$ and note that it is defined as 
$$C_{B^lv}(B^lx^k) = \begin{cases}
	B^lx^k,          & \text{if } k=0,1,\ldots n-2,\\
	B^lx^{n-1}+B^lv, & \text{if } k =n-1.
\end{cases}$$

Recall the $4$-dimensional subspace $W$ of $V$ spanned by the vectors $B^{-1}v, v, B^2v, B^3v$ and the vector space isomorphism $$\phi: V_0\to W\subset V,  \mbox{ defined by sending } B^l_0v_0\mapsto B^lv, \text{ for }l=-1,0,2,3$$ and $\Om_0$, the symplectic form preserved by $\Gamma(f_0,g_0)=\langle A_0,B_0\rangle$ on the vector space $V_0$, from the paragraph preceding to Lemma \ref{lc in x^3 and x^{n-1}}. With these notations, we get the following lemma.

\begin{lem}\label{symplectic iso}
	The symplectic spaces $(V_0,\Om_0)$ and $(W,\Om\vert_W)$ are symplectically isomorphic via the linear map $\phi$ mentioned above. In particular, $\Om\vert_W$ is non-degenerate. Write the orthogonal decomposition of $V$ with respect to $\Om$ as $V = W\oplus W^\perp$. The group generated by the transvections $B^{-1}CB, C, B^{3}CB^{-3}$ acts trivially on $W^\perp$, and this group is $\phi$-conjugate of the group generated by the transvections $B_0^{-1}C_0B_0,C_0,$ $B_0^3C_0B_0^{-3}$.
\end{lem}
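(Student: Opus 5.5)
The plan is to compare Gram matrices. Since $\phi$ sends the basis $\{B_0^{-1}v_0, v_0, B_0^2v_0, B_0^3v_0\}$ of $V_0$ to the basis $\{B^{-1}v, v, B^2v, B^3v\}$ of $W$, it suffices to show
\[\Om(B^kv,B^lv)=\Om_0(B_0^kv_0,B_0^lv_0)\quad\text{for } k,l\in\{-1,0,2,3\}.\]
Both $\Om$ and $\Om_0$ are invariant under $B$ and $B_0$ respectively, so $\Om(B^kv,B^lv)=\Om(v,B^{l-k}v)$. The same holds for $\Om_0$. Antisymmetry then reduces everything to the values $\Om(v,B^sv)$ and $\Om_0(v_0,B_0^sv_0)$ for $0\le s\le 4$.

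For these values I would use Remark \ref{form calculation} with the normalizations $\Om(v,x^{n-1})=1$ and $\Om_0(v_0,x^3)=1$. For $s\ge1$, $\Om(v,B^sv)$ is the coefficient of $x^{n-1}$ in the remainder of $x^{s-1}(f-g)$ modulo $g$. Likewise $\Om_0(v_0,B_0^sv_0)$ is the coefficient of $x^3$ in the remainder of $x^{s-1}(f_0-g_0)$ modulo $g_0$. Since $s-1\le 3$, Lemma \ref{lc in x^3 and x^{n-1}} with $p=9\ge5$ says these coefficients agree. The case $s=0$ is trivial because both sides are $0$.

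One point needs checking here: the normalizations of $\Om$ and $\Om_0$ are compatible. Both are fixed by the same rule, so the lemma gives equality on the nose and not merely up to a scalar. Once the Gram matrices agree, $\phi$ is an isometry. Non-degeneracy of $\Om|_W$ then follows from non-degeneracy of $\Om_0$, which is the unique invariant form for the coprime primitive pair $(f_0,g_0)$. Consequently $V=W\oplus W^\perp$.

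For the transvections, recall that $B^lCB^{-l}=C_{B^lv}$ is the map $w\mapsto w-\Om(w,B^lv)B^lv$. For $l\in\{-1,0,3\}$ we have $B^lv\in W$, so each of these maps fixes $W^\perp$ pointwise and preserves $W$. The analogous formula holds in $V_0$: $B_0^lC_0B_0^{-l}$ is $w_0\mapsto w_0-\Om_0(w_0,B_0^lv_0)B_0^lv_0$. Because $\phi$ is an isometry with $\phi(B_0^lv_0)=B^lv$, we get $\phi\circ(B_0^lC_0B_0^{-l})=(B^lCB^{-l})|_W\circ\phi$. Hence the group generated by $B^{-1}CB, C, B^3CB^{-3}$, restricted to $W$, is the $\phi$-conjugate of the corresponding group in $\Gamma(f_0,g_0)$, and it acts trivially on $W^\perp$. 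Example 2 uses the indices $\{-1,0,3\}$. The other rows of Table \ref{Table 1} are handled the same way with the appropriate indices, such as $\pm3,\pm4,-5$, using $A$ in place of $B$ where needed; there one relies on $Av=Bv$ and on the chosen index set keeping the differences $s$ within the range covered by the lemma.

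The main obstacle is the bookkeeping in the Gram-matrix reduction. Every difference $l-k$ must be brought, via antisymmetry and invariance, into $\{1,\dots,4\}$, so that Lemma \ref{lc in x^3 and x^{n-1}} applies with $j=s-1\le3$. For $\{-1,0,2,3\}$ the differences are at most $4$, so this works. The substantive input is the degree estimate already contained in that lemma.
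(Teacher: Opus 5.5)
Your proposal is correct and takes the same approach as the paper's proof. You reduce the Gram-matrix comparison, via $B$-invariance and antisymmetry, to matching $\Om(v,B^sv)$ with $\Om_0(v_0,B_0^sv_0)$ for $1\le s\le 4$, which the preceding coefficient-comparison lemma supplies (with $j=s-1\le 3$ and $p=9$), and you then use $B^lCB^{-l}(w)=w-\Om(w,B^lv)B^lv$ to get both the trivial action on $W^\perp$ and the $\phi$-conjugacy on $W$; you also state explicitly two points the paper leaves implicit, namely that the normalizations of $\Om$ and $\Om_0$ are compatible and that non-degeneracy of $\Om\vert_W$ follows from that of $\Om_0$.
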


\begin{proof}
	To prove the linear map $\phi$ to be a symplectic isomorphism, the only condition that needs to be verified is that it preserves the symplectic form, that is, $\Om_0(B_0^kv_0,B_0^lv_0)$ and $\Om(B^kv,B^lv)$ coincide for $k,l\in \{-1,0,2,3\}$. By Remark \ref{form calculation}, we only need to verify that $\Om_0(v_0, B_0^lv_0)$ and $\Om(v,B^lv)$ coincide for $1\le l\le 4$. By the same Remark \ref{form calculation}, we know that $\Om_0(v_0,B_0^lv_0)$ is the coefficient of $x^3$ in the vector $B_0^lv_0 =x^{l-1}(f_0(x)-g_0(x))$ when it is expressed as a linear combination of $1,x,x^2,x^3$. Similarly, writing the vector $B^lv =x^{l-1}(f(x)-g(x))$ in a linear combination of $1,x,\cdots, x^{n-1}$ gives $\Om(v,B^lv)$, which is the coefficient of $x^{n-1}$ in the vector $B^lv$. According to Lemma \ref{lc in x^3 and x^{n-1}}, the coefficient of $x^{3}$ in $x^{l-1}(f_0(x)-g_0(x))$ matches that of $x^{n-1}$ in $x^{l-1}(f(x)-g(x))$. Hence the linear map $\phi$ preserves the symplectic form.
	
	Let $w\in W^\perp$. We wish to show that $C_{B^lv}(w) = w$ for $l = -1,0,2,3$. We have $C_{B^lv}(w) = w - \Om(w, B^lv)B^lv$, for all $l = -1,0,2,3$. But, $\Om(w,B^lv) = 0$. Hence half of the second part of the lemma follows. 
	
	Using the fact  that $\phi$ is a symplectic isomorphism, we obtain that
	\begin{align}\label{symp iso of transvections}
		\phi\circ C_{B_0^lv_0}\circ \phi^{-1}(w) &= \phi\circ C_{B_0^lv_0}(\phi^{-1}(w))\nonumber \\ 
        &=\phi\left(\phi^{-1}(w)-\Omega_0(\phi^{-1}(w),B_0^lv_0)B_0^lv_0\right)\nonumber\\
        &=w-\Omega_0(\phi^{-1}(w),B_0^lv_0)\phi(B_0^lv_0)\nonumber\\
        &=w-\Omega(w,\phi(B_0^lv_0))\phi(B_0^lv_0)\nonumber\\
        &=C_{\phi(B_0^lv_0)}(w)\nonumber\\
        &=C_{B^lv}(w)\nonumber
	\end{align} 
    for all $w\in W$ and for $l = -1,0,2,3$. 
     It shows that $\phi\circ C_{B_0^lv_0}\circ \phi^{-1} =C_{B^lv}\vert_W$, where $C_{B^lv}\vert_W$ denotes the restriction of the transvection $C_{B^lv}$ on $W$. This completes the proof.
\end{proof}

The preparatory work we performed for a particular pair $((x-1)^4, (x^2+x+1)(x^2-x+1))$ needs to be done for all pairs listed in the Table \ref{Table 1}. For pairs whose associated transvections (cf. Table \ref{Table 1}) in $\Gamma(f_0,g_0)$ are conjugates of $C_0$ by powers of $B_0$, we set $V = \mathbb{Q}[x]/\langle g(x)\rangle$ and $V_0 = \mathbb{Q}[x]/\langle g_0(x)\rangle$. For pairs whose transvections (cf. Table \ref{Table 1}) in $\Gamma(f_0,g_0)$ are conjugates of $C_0$ by powers of $A_0$, we instead set $V = \mathbb{Q}[x]/\langle f(x)\rangle$ and $V_0 = \mathbb{Q}[x]/\langle f_0(x)\rangle$, which dictates that the operator $A$ acts on $V$ by multiplication by $x$. In this case, in Lemma \ref{lc in x^3 and x^{n-1}}, the division is to be performed by $f_0(x)P_m(x^p)$ and $f_0$ in place of $g_0(x)Q_m(x^p)$ and $g_0$, respectively.

To extend the above method to all the pairs $(f_0,g_0)$ of Table \ref{Table 1}, we must find a basis for $V_0$ and identify a suitable $4$-dimensional subspace $W$ of $V$. The subspace $W$ should be spanned by the vectors obtained by replacing $A_0, B_0$, and $v_0$ with $A, B$, and $v$, respectively, in our basis vectors for $V_0$; this allows us to define the natural linear isomorphism $\phi$ of $V_0$ with $W$. To get a lemma similar to Lemma \ref{lc in x^3 and x^{n-1}} for the other examples of Table \ref{Table 1}, we must specify a maximal integer value $q$ (say) for $j$ to get the statement of Lemma \ref{lc in x^3 and x^{n-1}} (where $q$ is $3$) hold for all $j\le q$, and obtain the minimal integer $p$ for which Lemma \ref{lc in x^3 and x^{n-1}} holds and Lemma \ref{symplectic iso} follows. 

Consequently, we fix three transvections $C_1, C_2=C$, and $C_3$ in $\Gamma(f,g)$, which respectively correspond to the given transvections of $\Gamma(f_0,g_0)$ in Table \ref{Table 1} with the subscript ``$0$" omitted. Thus, for each $(f_0,g_0)$ pair, the subgroup generated by $C_1,C_2,C_3$ of $\Gamma(f,g)$ is $\phi$-conjugate to the corresponding subgroup generated by the three transvections of $\Gamma(f_0,g_0)$ listed in Table \ref{Table 1}. Moreover, the former group acts trivially on $W^\perp$. We list all the required information in the following table. To avoid redundancy, the polynomial pairs are referenced using their respective indices from Table \ref{Table 1}.

\begin{table}[htbp]
	\centering
	\setlength{\tabcolsep}{6pt} 
	\renewcommand{\arraystretch}{1.4} 
	\caption{A basis of $V_0$, the maximal integer value $q$ for $j$ and the minimal integer value of $p$ for the polynomial pairs listed in Table \ref{Table 1}.}
	\label{Table 2}
	\begin{tabular}{ccccc}
		\toprule
		No. & Basis of $V_0$  & $q$ & Minimal $p$ \\ 
		\midrule
		1 & $\{B_0^{-3}v_0, v_0, B_0v_0, B_0^3v_0\}$ & $5$ & $7$ \\
		2 & $\{B_0^{-1}v_0, v_0, B_0^2v_0, B_0^3v_0\}$  & $3$ & $5$\\
		3 & $\{B_0^{-5}v_0, v_0, B_0^2v_0, B_0^3v_0\}$ & $7$ & $9$ \\
		4 & $\{A_0^{-4}v_0, v_0, A_0v_0, A_0^4v_0\}$  & $7$ & $9$ \\
		5 & $\{A_0^{-4}v_0, v_0, A_0v_0, A_0^4v_0\}$  & $7$ & $9$ \\
		6 & $\{B_0^{-3}v_0, v_0, B_0v_0, B_0^3v_0\}$ & $5$ & $7$ \\
		7 & $\{A_0^{-4}v_0, v_0, A_0v_0, A_0^4v_0\}$  & $7$ & $9$ \\
		8 & $\{B_0^{-3}v_0, v_0, B_0v_0, B_0^3v_0\}$  & $5$ & $7$ \\
		\bottomrule
	\end{tabular}
\end{table}

Thus, we conclude that $p=9$ is the minimal integer for which the Theorem \ref{main thm} holds for all the pairs of Table \ref{Table 1}. Hence, from now on $f(x) = f_0(x)P_m(x^9)$ and $g(x) = g_0(x)Q_m(x^9)$.

We are now set to prove Theorem \ref{main thm} using the method of the proof of \cite[Theorem 1.2]{S-V14}.

\subsection{Proof of Theorem \ref{main thm}}

It has been shown in \cite{SS15, SS17} that the subgroup generated by the three transvections, appearing in Table \ref{Table 1}, of the hypergeometric group $\Gamma(f_0,g_0)$ contains non-trivial unipotent elements corresponding to both the highest and second highest roots of the symplectic group $\Sp(4)$. It follows from the discussion following Lemma \ref{symplectic iso} that the subgroup generated by $C_1,C_2, C_3$, which acts trivially on $W^\perp$, of $\Gamma(f,g)$ contains copies of those unipotent elements and these copies are unipotent in $\Sp(n)$. Since the symplectic form $\Omega\vert_W$ is non-degenerate, there exist a basis $\{\e_1,\e_2,\e_2^*,\e_1^*\}$ of $W$ such that
$$\Om(\e_1,\e_2) = 0,\quad \Om(\e_1^*,\e_2^*)=0,\quad \Om(\e_i,\e_j^*) = \delta_{ij}, \text{ for }i,j =1,2$$
where 
$$\delta_{ij} = \begin{cases}
	1, &         \text{if } i=j,\\
	0, &         \text{otherwise }.
\end{cases}$$

As observed in Lemma \ref{symplectic iso}, we can write the vector space $V$ as the orthogonal direct sum, $V = W\oplus W^\perp$. We now extend the basis $\{\e_1,\e_2,\e_2^*,\e_1^*\}$ of $W$ to a basis of $V$ by choosing a basis for $W^\perp$ as $\{\e_3,\ldots,\e_{\frac{n}{2}}, \e_{\frac{n}{2}}^*,\ldots, \e_3^*\}$ in such a way that 
$$ \Om(\e_i,\e_j) = 0,\quad \Om(\e_i^*,\e_j^*) = 0,\quad \Om(\e_i,\e_j^*)=\delta_{ij}, \text{ for }i,j = 1,\ldots, \frac{n}{2}.$$
Now, take $\{\e_1,\e_2,\e_3,\ldots, \e_{\frac{n}{2}}, \e_{\frac{n}{2}}^*,\ldots, \e_3^*,\e_2^*,\e_1^*\}$ as an ordered basis of $V$. We observe that, with respect to this basis of $V$, the subgroup generated by the three transvections $C_1, C_2, C_3$ in $\Gamma(f,g)$ contains elements of the form 

$$ \begin{pmatrix}
	1 & 0 & 0 & 0 & *\\
	0 & 1 & 0 & 0 & 0 \\
	0 & 0 & I_{n-4} & 0 & 0\\
	0 & 0 & 0 & 1 & 0 \\
	0 & 0 & 0 & 0 & 1
\end{pmatrix}, \quad
\begin{pmatrix}
	1 & 0 & 0 & * & 0\\
	0 & 1 & 0 & 0 & * \\
	0 & 0 & I_{n-4} & 0 & 0\\
	0 & 0 & 0 & 1 & 0 \\
	0 & 0 & 0 & 0 & 1
\end{pmatrix}.$$

Also, with respect to the above basis of $V$, the group of diagonal matrices in $\Sp(n)$ forms a maximal torus $T$ and the group of upper triangular matrices in $\Sp(n)$ forms a Borel subgroup $B$ of $\Sp(n)$. Let $\Phi$ be the root system of the symplectic group $\Sp(n)$ associated to the maximal torus $T$. Let $\Phi^+$ be the set of positive roots of $\Phi$ that determines the Borel subgroup $B$. Now, it is immediate that the above two non-trivial unipotent elements, found in $\Gamma(f,g)$, respectively, correspond to the highest and second highest roots in $\Phi^+$. 

Since $\Gamma(f,g)$ is Zariski dense in $\Sp(n)$ and it contains non-trivial unipotent elements corresponding to the highest and second highest roots of $\Sp(n)$, it follows from Venkataramana's arithmeticity criterion \cite[Theorem 3.5]{TNV87} that $\Gamma(f,g)$ is arithmetic in $\Sp(n)$.  \qed

\section{Remarks on generalization} \label{section 5}

A natural follow-up question arises: \textit{can we use the same technique to produce more infinite families of arithmetic subgroups of the integral symplectic group}? The answer is `\textit{Yes}'. Instead of starting with a pair of degree four polynomials, we can start with one of degree six. We list pairs of degree six polynomials in Table \ref{Table 3}, whose arithmeticity is known from \cite{B-D-S-S21}, for which one can prove the following:

``{\textit {Let $m \in\N$, and let $(f_0, g_0)$ be a pair of polynomials listed in Table \ref{Table 3}. Suppose $P_m$ and $Q_m$ are monic, self-reciprocal polynomials of degree $2m$ with integer coefficients, satisfying $P_m(0) = Q_m(0) = 1$. Let $f(x) = f_0(x)P_m(x^6)$ and $g(x) = g_0(x)Q_m(x^6)$. If $f$ and $g$ are coprime, then the group $\Gamma(f,g)$ is an arithmetic subgroup of the symplectic group $\operatorname{Sp}({12m+6})$.}}"

As noted in \cite{B-D-S-S21}, an element $\gamma_0\in \Gamma(f_0,g_0)$ provides three transvections, $\gamma_0^{-1}C_0\gamma_0, C_0,$ $\gamma_0 C_0\gamma_0^{-1}$, which contains unipotent elements corresponding to the highest and second highest roots of the corresponding symplectic group. In Table \ref{Table 3}, we list only the simplest cases, for which, the hypotheses of the above paragraph holds: here we consider the cases where $\gamma_0 = B_0^2$. Let us denote the parameters of $f_0$ and $g_0$ by $\alpha_0$ and $\beta_0$ respectively. The other cases, which involve the conjugations either by powers of $A_0$ or by powers of $B_0$, can also be dealt with in a similar fashion and $x^6$ appearing in $f(x) = f_0(x)P(x^6)$ will vary depending on the particular pair, provided we are able to find the particular basis of $V_0$, similar to the degree four cases. 

\begin{table}[htbp]
	\centering
	\setlength{\tabcolsep}{6pt} 
	\renewcommand{\arraystretch}{1.4} 
	\caption{A list of pairs of parameters of degree six, for which, the idea of the proof of Theorem \ref{main thm} works.}
	\label{Table 3}
	\begin{tabular}{cccccc}
		\toprule
		No. & $\alpha_0$  &$\beta_0$ & No. & $\alpha_0$  &$\beta_0$  \\ 
		\midrule
		1 & $\left(0,0,0,0,\frac{1}{2}, \frac{1}{2}\right)$ & $\left(\frac{1}{3}, \frac{2}{3},\frac{1}{4}, \frac{3}{4},\frac{1}{4}, \frac{3}{4}\right)$  &
		2 & $\left(0,0,0,0,\frac{1}{2}, \frac{1}{2}\right)$ & $\left(\frac{1}{3}, \frac{2}{3},\frac{1}{8}, \frac{3}{8},\frac{5}{4}, \frac{7}{8}\right)$ \\
		3 & $\left(0,0,0,0,\frac{1}{2}, \frac{1}{2}\right)$ & $\left(\frac{1}{3}, \frac{2}{3},\frac{1}{12}, \frac{5}{12},\frac{7}{12}, \frac{11}{12}\right)$ &
		4 & $\left(0,0,0,0,\frac{1}{2}, \frac{1}{2}\right)$ & $\left(\frac{1}{7}, \frac{2}{7},\frac{3}{7}, \frac{4}{7},\frac{5}{7}, \frac{6}{7}\right)$ \\
		5 & $\left(0,0,0,0,\frac{1}{3}, \frac{2}{3}\right)$ & $\left(\frac{1}{4}, \frac{3}{4},\frac{1}{8}, \frac{3}{8},\frac{5}{8}, \frac{7}{8}\right)$ &
		6 & $\left(0,0,0,0,\frac{1}{3}, \frac{2}{3}\right)$ & $\left(\frac{1}{5}, \frac{2}{5}, \frac{3}{5},\frac{4}{5}, \frac{1}{6},\frac{5}{6}\right)$ \\
		7 & $\left(0,0,0,0,\frac{1}{4}, \frac{3}{4}\right)$ & $\left(\frac{1}{3}, \frac{2}{3},\frac{1}{6}, \frac{1}{6},\frac{5}{6}, \frac{5}{6}\right)$ &
		8 & $\left(0,0,\frac{1}{2},\frac{1}{2}, \frac{1}{3}, \frac{2}{3}\right)$ & $\left(\frac{1}{4}, \frac{3}{4},\frac{1}{6}, \frac{1}{6},\frac{5}{6}, \frac{5}{6}\right)$ \\
		9 & $\left(0,0,\frac{1}{2},\frac{1}{2}, \frac{1}{3}, \frac{2}{3}\right)$ & $\left(\frac{1}{6}, \frac{5}{6},\frac{1}{10}, \frac{3}{10},\frac{7}{10}, \frac{9}{10}\right)$ & 
		10 & $\left(0,0,\frac{1}{2},\frac{1}{2}, \frac{1}{4}, \frac{3}{4}\right)$ & $\left(\frac{1}{3}, \frac{1}{3},\frac{1}{3}, \frac{2}{3},\frac{2}{3}, \frac{2}{3}\right)$\\
		11 & $\left(0,0,\frac{1}{5},\frac{2}{5}, \frac{3}{5}, \frac{4}{5}\right)$ & $\left(\frac{1}{3}, \frac{1}{3},\frac{2}{3}, \frac{2}{3},\frac{1}{4}, \frac{3}{4}\right)$ &
		12 & $\left(0,0,\frac{1}{8},\frac{3}{8}, \frac{5}{8}, \frac{7}{8}\right)$ & $\left(\frac{1}{3}, \frac{1}{3},\frac{2}{3}, \frac{2}{3},\frac{1}{6}, \frac{5}{6}\right)$\\
		13 & $\left(0,0,\frac{1}{8},\frac{3}{8}, \frac{5}{8}, \frac{7}{8}\right)$ & $\left(\frac{1}{3}, \frac{2}{3},\frac{1}{4}, \frac{3}{4},\frac{1}{4}, \frac{3}{4}\right)$ &
		14 & $\left(0,0,\frac{1}{8},\frac{3}{8}, \frac{5}{8}, \frac{7}{8}\right)$ & $\left(\frac{1}{4}, \frac{3}{4},\frac{1}{5}, \frac{2}{5},\frac{3}{5}, \frac{4}{5}\right)$\\
		15 & $\left(0,0,\frac{1}{10},\frac{3}{10}, \frac{7}{10}, \frac{9}{10}\right)$ & $\left(\frac{1}{3}, \frac{2}{3},\frac{1}{4}, \frac{3}{4},\frac{1}{6}, \frac{5}{6}\right)$ &
		16 & $\left(0,0,\frac{1}{10},\frac{3}{10}, \frac{7}{10}, \frac{9}{10}\right)$ & $\left(\frac{1}{4}, \frac{1}{4},\frac{1}{4}, \frac{3}{4},\frac{3}{4}, \frac{3}{4}\right)$\\
		17 & $\left(0,0,\frac{1}{12},\frac{5}{12}, \frac{7}{12}, \frac{11}{12}\right)$ & $\left(\frac{1}{3}, \frac{2}{3},\frac{1}{8}, \frac{3}{8},\frac{5}{8}, \frac{7}{8}\right)$ &
		18 & $\left(0,0,\frac{1}{12},\frac{5}{12}, \frac{7}{12}, \frac{11}{12}\right)$ & $\left(\frac{1}{4}, \frac{3}{4},\frac{1}{5}, \frac{2}{5},\frac{3}{5}, \frac{4}{5}\right)$\\
		\bottomrule
	\end{tabular}
\end{table}

Another question that arises is: \textit{can we use a word $\gamma_0$ in both $A_0$ and $B_0$ to conjugate $C_0$, instead of a word which is just a power of $A_0$ or $B_0$}? This approach does not seem feasible due to the restriction of the action of the operator $\gamma_0$ on $V$, which cannot be realised as a multiplication by $x$. Take, for instance, the simplest case where $\gamma_0 = B_0A_0$ (cf. \cite[S.No. 117]{B-D-S-S21}). Even if we somehow manage to find a desired basis for $V_0$, which is also not obvious, but $\{\gamma^{-1}v,v,\gamma v\}$ need not be linearly independent (see \cite[Remark 2]{B-D-S-S21}) and hence cannot be a part of the desired basis of the subspace $W$, and a result of this we cannot match the leading coefficients like what we did in Lemma \ref{lc in x^3 and x^{n-1}}. 

We now have a general question:

\begin{que}
	Does the arithmeticity of  $\Gamma(f_0,g_0)$ imply the arithmeticity of $\Gamma(f,g)$, where $f$ and $g$ are multiples of $f_0$ and $g_0$, respectively, like in Theorem \ref{main thm}?
\end{que}


\subsection*{Acknowledgements}

The author would like to express his gratitude to his PhD advisor, Sandip Singh, for introducing him to the theory of hypergeometric groups and for his unwavering support, insightful remarks, and helpful suggestions and discussions. The author also acknowledges the PhD fellowship provided by IIT Bombay.

\vspace{1cm}
 
\end{document}